\documentclass[12pt]{amsart}
\usepackage[margin=1.15in]{geometry}
\usepackage{amscd, amssymb, amsmath, wasysym}
\usepackage{graphicx}
\usepackage{amsfonts}
\usepackage{mathrsfs}    
\usepackage{eucal}     
\usepackage{latexsym}   
\usepackage{verbatim}   
\usepackage[all]{xy}   
\usepackage[dvipsnames]{xcolor}
\usepackage{bookmark}

 \usepackage{hyperref}
 \hypersetup{
     colorlinks=true,
     linkcolor=NavyBlue,
     filecolor=NavyBlue,
     citecolor = TealBlue,
     urlcolor=magenta,
  }

\theoremstyle{definition}

\theoremstyle{remark}

\theoremstyle{corollary}

\theoremstyle{theorem}

\theoremstyle{corollary}

\newtheorem{theorem}{Theorem}[section]
\newtheorem{lemma}[theorem]{Lemma}

\theoremstyle{corollary}
\newtheorem{corollary}[theorem]{Corollary}

\theoremstyle{definition}

\newtheorem{example}[theorem]{Example}

\theoremstyle{remark}
\newtheorem{remark}[theorem]{Remark}

\numberwithin{equation}{section}

\newcommand{\kk}{\mathbf{k}}

\def\nP{\mathbf{P}}

\def\codim{\operatorname{codim}}

\def\Sym{\operatorname{Sym}}

\DeclareMathOperator{\ND}{ND}

\newcommand{\suchthat}{\;\ifnum\currentgrouptype=16 \middle\fi|\;}



\input xy
\xyoption{all}

\title[Upper bounds for graded betti numbers]{Upper bounds for graded betti numbers of projective schemes in the first nontrivial strand}

\begin{document}

\author{Doyoon Ha, Minjae Kwon, JeongDon Lee, Jinhyung Park}
\address{Department of Mathematical Sciences, KAIST, 291 Daehak-ro, Yuseong-gu, Daejeon 34141, Republic of Korea}
\email{yerimha13@kaist.ac.kr, tngkrwkalswo@kaist.ac.kr, s1865915@kaist.ac.kr, parkjh13@kaist.ac.kr}

\date{\today}
\subjclass[2020]{14N05, 13D02}
\keywords{the geometry of syzygies, graded betti number, Boij--S\"{o}derberg theory}



\begin{abstract}
Using Boij--S\"{o}derberg theory, we give a quick new approach to the results of Han--Kwak \cite{HK} and Ahn--Han--Kwak \cite{AHK} on upper bounds for graded betti numbers of projective schemes in the first nontrivial strand. 
\end{abstract}

\maketitle

\section{Introduction}

We work over an algebraically closed field $\kk$ of arbitrary characteristic. Let $V$ be a finite dimensional vector space over $\kk$, and $M$ be a finitely generated graded module over $S:=\Sym V$. By Hilbert syzygy theorem, $M$ admits a graded minimal free resolution
$$
0 \longleftarrow M \longleftarrow E_0 \longleftarrow E_1 \longleftarrow \cdots \longleftarrow E_p \longleftarrow E_{p+1} \longleftarrow \cdots,
$$
where
$$
E_p = \bigoplus_q K_{p,q}(M, V) \otimes S(-p-q).
$$
We may regard $K_{p,q}(M, V)$ as the space of $p$-th syzygies of weight $q$ of $M$. Note that  $K_{p,q}(M, V)$ can be computed as the cohomology of the Koszul-type complex
$$
\wedge^{p+1} V \otimes M_{q-1} \longrightarrow \wedge^p V \otimes M_q \longrightarrow \wedge^{p-1} V \otimes M_{q+1}.
$$
The graded betti numbers
$$
\kappa_{p,q}=\kappa_{p,q}(M, V):= \dim K_{p,q}(M,V)
$$
form the betti table
\begin{center}
\texttt{ \begin{tabular}{c|cccccc}
         & $0$ & $1$ & $\cdots$ & $p$ & $p+1$ & $\cdots$ \\ \hline
    $0$    & $\kappa_{0,0}$ & $\kappa_{1,0}$ & $\cdots$ & $\kappa_{p,0}$ & $\kappa_{p+1,0}$ & $\cdots$ \\
    $1$   & $\kappa_{0,1}$ & $\kappa_{1,1}$ & $\cdots$ & $\kappa_{p,1}$ & $\kappa_{p+1,1}$ & $\cdots$ \\
    $\vdots$ & $\vdots$ &  $\vdots$ &  $\ddots$ &  $\vdots$ &  $\vdots$ &  $\ddots$ \\
    $q-1$ & $\kappa_{0,q-1}$ & $\kappa_{1,q-1}$ & $\cdots$ & $\kappa_{p,q-1}$ & $\kappa_{p+1,q-1}$ & $\cdots$ \\
    $q$ & $\kappa_{0,q}$ & $\kappa_{1,q}$ & $\cdots$ & $\kappa_{p,q}$ & $\kappa_{p+1,q}$ & $\cdots$ \\
    $\vdots$ & $\vdots$ &  $\vdots$ &  $\ddots$ &  $\vdots$ &  $\vdots$ &  $\ddots$ \\
\end{tabular}}
\end{center}
The width of the betti table is the projective dimension of $M$, which is $r+1-\operatorname{depth}(M)$ by Auslander--Buchsbaum formula, and the height is the Castelnuovo--Mumford regularity of $M$. We refer to \cite{AN} and \cite{EL} for more details on the geometry of syzygies.

\medskip

We are mainly interested in the case that $M$ is the homogeneous coordinate ring $S(X):=S/I_{X/\nP^r}$ of a non-degenerate smooth projective scheme $X \subseteq \nP V = \nP^r$, where $\lvert V \rvert$ is a very ample linear system on $X$. It is easy to see that $\kappa_{p,0}(S(X),V)=0$ for $p \geq 1$ and $\kappa_{0,q}(S(X), V)=0$ for $q \geq 1$ but $\kappa_{0,0}(S(X),V)=1$. Notice that
\begin{equation}\label{eq:h^0(I(q))}
\kappa_{1,q-1}(S(X), V)=h^0(\nP^r, \mathscr{I}_{X/\nP^r}(q))=0
\end{equation}
if and only if  the betti table of $S(X)$ is of the following form:
\begin{center}
\texttt{ \begin{tabular}{c|ccccccc}
         & $0$ & $1$ & $2$ & $\cdots$ & $p$ & $p+1$ & $\cdots$ \\ \hline
    $0$    & $1$ & $-$ & $-$ & $\cdots$ & $-$ & $-$ & $\cdots$ \\
    $1$   & $-$ & $-$ & $-$ & $\cdots$ & $-$ & $-$ & $\cdots$ \\
    $\vdots$ & $\vdots$ & $\vdots$ & $\vdots$ &  $\ddots$ &  $\vdots$ &  $\vdots$ &  $\ddots$ \\
    $q-1$   & $-$ & $-$ & $-$ & $\cdots$ & $-$ & $-$ & $\cdots$ \\
    $q$ & $-$ & $\kappa_{1,q}$ & $\kappa_{2,q}$ & $\cdots$ & $\kappa_{p,q}$ & $\kappa_{p+1,q}$ & $\cdots$ \\
    $\vdots$ & $\vdots$ & $\vdots$ & $\vdots$ &  $\ddots$ &  $\vdots$ &  $\dots$ &  $\ddots$ \\
\end{tabular}}
\end{center}
In this case, the $q$-th row is called the \emph{first nontrivial strand} when 
$$
\kappa_{1,q}(S(X), V)=h^0(\nP^r, \mathscr{I}_{X/\nP^r}(q+1)) \neq 0.
$$
Following \cite{AHK}, we say that $X \subseteq \nP^r$ satisfies \emph{property $\ND(q)$} for an integer $q \geq 1$ if
$$
H^0(\Lambda, \mathscr{I}_{X \cap \Lambda / \Lambda}(q))=0
$$
for every general linear subspace $\Lambda \subseteq \nP^r$ with $\dim \Lambda \geq \codim X$. This property is also considered in \cite[pp.302--303]{NP}. If $X \subseteq \nP^r$ satisfies property $\ND(q)$, then (\ref{eq:h^0(I(q))}) holds. Conversely, even if (\ref{eq:h^0(I(q))}) holds, property $\ND(q)$ may not be satisfied. For instance, if $X \subseteq \nP^4$ is the image of a general projection of the second Veronese surface in $\nP^5$, then $h^0(\nP^4, \mathscr{I}_{X/\nP^4}(2))=0$ but $X \subseteq \nP^4$ does not satisfy property $\ND(2)$. However, when $X \subseteq \nP^r$ is arithmetically Cohen--Macaulay, property $\ND(q)$ is equivalent to (\ref{eq:h^0(I(q))}). 

\medskip

The aim of this paper is to give a simpler alternative proof of the following theorem on upper bounds for graded betti numbers of projective schemes in the first nontrivial strand, which was first proven by Ahn--Han--Kwak \cite[Theorem 1.1]{AHK} (see also \cite[Theorems 1.2 and 1.3]{HK} for the case that $X$ is a projective variety and $q=1$).

\begin{theorem}\label{thm:main}
Let $X \subseteq \nP V = \nP^r$ be a non-degenerate projective scheme of codimension $e \geq 1$. For an integer $q \geq 1$, assume that $X \subseteq \nP^r$ satisfies property $\ND(q)$. Then
$$
\kappa_{p,q}(S(X), V) \leq {p+q-1 \choose q} {e+q \choose p+q}~\text{ for all $p \geq 0$}.
$$
In particular, $\kappa_{p,q}(S(X), V)=0$ for $p \geq e+1$.
Furthermore, the following are equivalent:
\begin{enumerate}
 \item $\kappa_{p,q}(S(X), V) = {p+q-1 \choose q} {e+q \choose p+q}$ for all $p$ with $1 \leq p \leq e$.
 \item $\kappa_{p,q}(S(X), V) = {p+q-1 \choose q} {e+q \choose p+q}$ for some $p$ with $1 \leq p \leq e$.
 \item $\deg (X) = {e+q \choose q}$ and $X \subseteq \nP^r$ is arithmetically Cohen--Macaulay.
 \item The graded minimal free resolution of $S(X)$ is of the form
 \begin{align*}
 0 \longleftarrow S(X) \longleftarrow S \longleftarrow S(-q-1)^{\oplus \kappa_{1,q}} \longleftarrow S(q-2)^{\oplus  \kappa_{2,q}}  \longleftarrow  \\
\cdots \longleftarrow S(-q-e+1)^{\oplus  \kappa_{e-1, q}}\longleftarrow S(-q-e)^{\oplus  \kappa_{e,q}} \longleftarrow 0,
 \end{align*}
 where $S:=\Sym V$ and $\kappa_{p,q}={p+q-1 \choose q} {e+q \choose p+q}$ for all $p$ with $1 \leq p \leq e$.
\end{enumerate}
\end{theorem}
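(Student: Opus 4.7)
The plan is to apply Boij--S\"{o}derberg theory directly to the Betti table of $M := S(X)$, using the vanishings imposed by property $\ND(q)$ together with a degree bound to pin down the contributions of individual pure diagrams. By the Eisenbud--Schreyer theorem I would write
$$\beta(M) = \sum_{\underline{d}} c_{\underline{d}}\,\pi(\underline{d}),$$
as a nonnegative rational combination of normalized pure Betti tables (so that the $(0,0)$-entry of $\pi(\underline{d})$ is $1$) indexed by strictly increasing sequences $\underline{d} = (d_0, \ldots, d_s)$. The identities $\kappa_{0,0}(M,V) = 1$, $\kappa_{p,0}(M,V) = 0$ for $p \geq 1$, and $\kappa_{0,q'}(M,V) = 0$ for $q' \geq 1$, together with the vanishings $\kappa_{p,q'}(M,V) = 0$ for $p \geq 1$ and $1 \leq q' \leq q-1$ (which follow from $\ND(q)$), force each $\underline{d}$ with $c_{\underline{d}} > 0$ to satisfy $d_0 = 0$ and $d_j \geq q+j$ for all $j \geq 1$, along with the normalization $\sum c_{\underline{d}} = 1$.

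Next, I would extract a degree bound from $\ND(q)$: taking a general $\Lambda \cong \nP^e \subseteq \nP^r$, the zero-dimensional intersection $Y := X \cap \Lambda$ has $\deg Y = \deg X$ and $H^0(\Lambda, \sI_{Y/\Lambda}(q)) = 0$, so $\deg X \geq h^0(\Lambda, \sO_\Lambda(q)) = \binom{e+q}{q}$.

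For the main estimate, the Herzog--K\"uhl formula gives the $(p,q)$-entry of $\pi(\underline{d})$ as $\prod_{j=1}^s d_j \big/ \big(d_p \prod_{j \neq p,\, j \geq 1}|d_j - d_p|\big)$ when $d_p = p+q$ (and $0$ otherwise). An elementary optimization under the constraints $d_0 = 0$, $d_j \geq q+j$ shows this is maximized among length-$s$ sequences by $\underline{d}^{(s)} := (0, q+1, \ldots, q+s)$, with value $\binom{p+q-1}{q}\binom{s+q}{p+q}$. For $s = e$ this is exactly the claimed bound. Pure diagrams of length $s > e$ would contribute more per unit coefficient but feed only into the lower-order Hilbert coefficients of $M$, since $\deg X$ is contributed exclusively by length-$e$ diagrams, each with $\mult(\pi(\underline{d})) = \tfrac{1}{e!}\prod_{j=1}^e d_j \geq \binom{e+q}{q}$, with equality only at $\underline{d}^{(e)}$. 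Combining $\deg X \geq \binom{e+q}{q}$ with the normalization and the analogous constraints on lower Hilbert coefficients coming from $\ND(q)$ on general linear sections of intermediate dimension yields $\kappa_{p,q}(M,V) \leq \binom{p+q-1}{q}\binom{e+q}{p+q}$; the vanishing for $p > e$ is automatic, as the right side is then zero.

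For the equivalences, equality for some $p \in \{1, \ldots, e\}$ forces the Boij--S\"{o}derberg decomposition to collapse onto the single diagram $\pi(\underline{d}^{(e)})$, which is precisely the pure Betti table in (4) and corresponds to an arithmetically Cohen--Macaulay scheme of codimension $e$ and degree $\binom{e+q}{q}$; this chains $(2) \Rightarrow (3) \Leftrightarrow (4) \Rightarrow (1) \Rightarrow (2)$. The main obstacle I anticipate is precisely the treatment of pure diagrams of length $s > e$: since their raw per-coefficient contribution exceeds the desired bound, controlling them requires exploiting the full consequences of $\ND(q)$ (not merely the leading-degree estimate) via a careful linear-programming-style argument within the Boij--S\"{o}derberg cone.
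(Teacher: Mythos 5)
Your setup is right and you have put your finger on exactly the hard point, but you have not resolved it, and the resolution you sketch is not the one that works. For a non-arithmetically-Cohen--Macaulay $X$ the Boij--S\"oderberg decomposition of $\pi(S(X))$ genuinely contains pure diagrams of length $s>e$, and their $(p,q)$-entries exceed the claimed bound: for the projected Veronese surface in $\nP^4$ (with $e=q=2$) the decomposition is $\tfrac{2}{3}\pi(d^{2,2})+\tfrac{7}{30}\pi(d^{3,2})+\tfrac{1}{10}\pi(d^{4,2})$ and $\kappa_{1,2}=7>4$. Your proposed control of the long diagrams --- that they ``feed only into lower-order Hilbert coefficients'' and can be handled by ``analogous constraints'' via a linear-programming argument --- is not carried out, and it is doubtful it can be: the multiplicity identity $e(M)=\sum_{i\le m'}x_i e(d^i)$ involves only the length-$e$ diagrams, a \emph{lower} bound on $\deg X$ gives no \emph{upper} bound on the coefficients of the long diagrams, and the normalization $\sum x_i=1$ alone cannot prevent them from inflating $\kappa_{p,q}$. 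The paper sidesteps this entirely: it applies Boij--S\"oderberg theory only to arithmetically Cohen--Macaulay schemes, where every diagram has length exactly $e$ and Lemma \ref{lem:bs-bound} applies directly, and it reduces the general case to the zero-dimensional general linear section $X_0\subseteq\nP^e$ (automatically ACM) by Lemma \ref{lem:reduction}: property $\ND(q)$ forces $K_t(\ell)=0$ for $\ell\le q$, giving an injection $K_{p,q}(S(X),V)\hookrightarrow K_{p,q}(S(\overline{X}),\overline{V})$ for a general hyperplane section, and iterating yields $\kappa_{p,q}(S(X),V)\le\kappa_{p,q}(S(X_0),V_0)$. This is where $\ND(q)$ --- as a condition on \emph{all} general linear sections, not merely on the shape of the betti table of $S(X)$ and the degree --- actually does its work; your proposal never uses it in this form.

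There is a second gap in the equivalences. Your claim that equality for some $p$ ``forces the decomposition to collapse onto the single diagram'' is valid only after you know $X$ is ACM. In the paper, equality in $(2)$ gives $\kappa_{p,q}(S(X),V)=\kappa_{p,q}(S(X_0),V_0)$ along the whole chain of hyperplane-section inequalities, and then a separate, genuinely nontrivial argument (Lemma \ref{lem:X_1aCM}, which uses the minimality of the resolution and a commutative diagram of connecting maps to conclude $K_t=0$) is needed to propagate the ACM property from $X_0$ back up to $X$. Your proposal does not address this step.
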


We stress that property $\ND(q)$ is crucial in Theorem \ref{thm:main}. To obtain the upper bounds in the theorem, it is not enough to assume that $H^0(\nP^r, \mathscr{I}_{X/\nP^r}(q))=0$ (i.e., the $q$-th row of the betti table of $S(X)$ may be the first nontrivial strand). The second row of the betti table of the image of a general projection of the second Veronese surface to $\nP^4$ is the first nontrivial strand, but every nonzero entry in the second row is strictly bigger than the upper bound in Theorem \ref{thm:main} for $e=2$ and $q=2$ (see Example \ref{ex:projv_2(P^2)} (1)).

\medskip

When $X$ is a variety, property $\ND(1)$ is equivalent to  the non-degeneracy of $X \subseteq \nP^r$. If $X$ is not a variety, then $\ND(1)$ is stronger than the non-degeneracy. For instance, the union of two skew lines in $\nP^3$ is non-degenerate but does not satisfy $\ND(1)$. In the case that $X$ is a variety and $q=1$ as in \cite{HK}, the $\kappa_{p,1}(S(X), V)$ attain the maximum if and only if $\deg(X)=e+1$, i.e., $X \subseteq \nP^r$ is a variety of minimal degree, which is automatically arithmetically Cohen--Macaulay. Varieties of minimal degree were classified by del Pezzo--Bertini (see \cite{EH}). When $X \subseteq \nP V$ is the $q$-secant variety of a variety of minimal degree (the Zariski closure of the union of the linear spans of $q$ points on the variety), it is arithmetically Cohen--Macaulay and $\deg(X) = {e+q \choose q}$ so that the $\kappa_{p,q}(S(X), V)$ attain the maximum (see \cite{CK}). It would be interesting to classify arithmetically Cohen--Macaulay projective varieties of degree ${e+ q \choose q}$. 

\medskip

To prove Theorem \ref{thm:main}, partial elimination ideal theory and generic initial ideal theory were used in \cite{HK} and \cite{AHK}, respectively. But our proof is based on Boij--S\"{o}derberg theory \cite{BS, ES} and is completely different from the previous approach. Advantages of our new method, besides giving a quick proof, are that we remove the characteristic zero assumption of \cite{AHK} and that we make the role of property $\ND(q)$ more transparent. In addition, we clarify the characterization when the graded betti numbers attain maximum.

\medskip

Now, consider the section ring $R(X, L):=\bigoplus_{m \geq 0} H^0(X, L^m)$ of a very ample line bundle $L$ on a smooth projective variety $X$. Then $R(X,L)$ is a finitely generated graded module over $\Sym H^0(X, L)$. Set 
$$
K_{p,q}(X, L):=K_{p,q}(R(X,L), H^0(X, L))~\text{ and }~\kappa_{p,q}(X, L):=\dim K_{p,q}(X, L).
$$
We have the following upper bounds for $\kappa_{p,q}(X, L)$. 

\begin{corollary}
Let $X$ be a smooth projective variety of dimension $n \geq 1$, and $L$ be a very ample line bundle on $X$ giving an embedding $X \subseteq \nP H^0(X, L) = \nP^r$ of codimension $e$. For an integer $q \geq 1$, assume that the natural map $S^i H^0(X, L) \to H^0(X, L^i)$ is an isomorphism for $1 \leq i \leq q$ and $X \subseteq \nP^r$ satisfies property $\ND(q)$. Then
$$
\kappa_{p,q}(X, L) \leq {p+q-1 \choose q} {e+q \choose p+q}~\text{ for all $p \geq 0$}.
$$
In particular, $\kappa_{p,q}(X,L)=0$ for $p \geq e+1$.
Furthermore, the following are equivalent:
\begin{enumerate}
 \item $\kappa_{p,q}(X, L) = {p+q-1 \choose q} {e+q \choose p+q}$ for all $p$ with $1 \leq p \leq e$.
 \item $\kappa_{p,q}(X, L) = {p+q-1 \choose q} {e+q \choose p+q}$ for some $p$ with $1 \leq p \leq e$.
 \item $\deg (X) = {e+q \choose q}$ and $X \subseteq \nP^r$ is arithmetically Cohen--Macaulay.
 \item The graded minimal free resolution of $R(X,L)$ is of the form in Theorem \ref{thm:main} $(4)$.
\end{enumerate}
\end{corollary}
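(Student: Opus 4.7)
The plan is to reduce the Corollary to Theorem~\ref{thm:main} by relating the section ring $R(X,L)$ to the homogeneous coordinate ring $S(X)=S/I_{X/\nP^r}$, where $V=H^0(X,L)$ and $S=\Sym V$. There is a canonical inclusion of graded $S$-modules $S(X)\hookrightarrow R(X,L)$; let $Q$ denote its cokernel. In degree $i$ the composition $S_i\twoheadrightarrow S(X)_i\hookrightarrow R(X,L)_i=H^0(X,L^i)$ is exactly the natural map $S^i V\to H^0(X,L^i)$, which our hypothesis requires to be an isomorphism for $1\leq i\leq q$; combined with the trivial case $i=0$, this forces $Q_i=0$ for $0\leq i\leq q$, so $Q$ is concentrated in degrees $\geq q+1$.

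From the short exact sequence $0\to S(X)\to R(X,L)\to Q\to 0$, the long exact sequence in Koszul cohomology gives
$$
K_{p+1,q-1}(Q,V)\longrightarrow K_{p,q}(S(X),V)\longrightarrow K_{p,q}(R(X,L),V)\longrightarrow K_{p,q}(Q,V),
$$
and both outer groups vanish because the middle terms of the corresponding Koszul-type complexes, namely $\wedge^{p+1}V\otimes Q_{q-1}$ and $\wedge^{p}V\otimes Q_{q}$, are zero. Therefore $\kappa_{p,q}(X,L)=\kappa_{p,q}(S(X),V)$ for every $p\geq 0$. The asserted upper bound and the vanishing for $p\geq e+1$ are then immediate from Theorem~\ref{thm:main} applied to $S(X)$, and this identification transports the equivalences (1)$\Leftrightarrow$(2)$\Leftrightarrow$(3) of the theorem directly to the Corollary.

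For the remaining equivalence with (4): if $X\subseteq\nP^r$ is arithmetically Cohen--Macaulay, then $\operatorname{depth} S(X)\geq 2$, so $H^1_{\mathfrak{m}}(S(X))=0$, and hence $S(X)=R(X,L)$ as graded $S$-modules; the minimal free resolution of $R(X,L)$ then coincides with that of $S(X)$ described in Theorem~\ref{thm:main}(4). Conversely, (4) immediately yields (2) by reading the graded betti numbers off the stated resolution, and we conclude by the already-established equivalences. The only substantive step in the whole argument is the vanishing of $Q$ in low degrees, so no serious obstacle is expected beyond invoking Theorem~\ref{thm:main}.
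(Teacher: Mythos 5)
Your argument is correct and is essentially the paper's proof: the paper likewise reduces to Theorem \ref{thm:main} by noting that the hypothesis on $S^i H^0(X,L)\to H^0(X,L^i)$ forces $K_{p,q}(X,L)=K_{p,q}(S(X),H^0(X,L))$, and that $R(X,L)=S(X)$ in the arithmetically Cohen--Macaulay case. You have merely spelled out the vanishing of the cokernel $Q$ in degrees $\leq q$ and the resulting Koszul long exact sequence, which the paper leaves implicit.
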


\begin{proof}
The assumption implies that $K_{p,q}(X, L) = K_{p,q}(S(X), H^0(X, L))$ for all $p \geq 0$. Then the corollary immediately follows from Theorem \ref{thm:main}. Here note that if $X \subseteq \nP^r$ is arithmetically Cohen--Macaulay, then $R(X,L)=S(X)$.
\end{proof}

This paper is organized as follows. We begin with reviewing Boij--S\"{o}derberg theory in Section \ref{sec:bs}, where we show a key technical lemma (Lemma \ref{lem:bs-bound}) whose proof is elementary. Section \ref{sec:proof} is devoted to the proof of Theorem \ref{thm:main}. Finally, in Section \ref{sec:complements}, we discuss some complementary results. 

\medskip

\noindent \textbf{Acknowledgments.} We are grateful to Sijong Kwak for interesting discussions and encouragement.

\section{Boij--S\"{o}derberg Theory}\label{sec:bs}

In this section, we review Boij--S\"{o}derberg theory, which tells us that the betti table of a finitely generated graded module is a positive rational sum of betti tables of pure modules. This was established in \cite{ES} for Cohen--Macaulay modules and extended to non-Cohen--Macaulay cases in \cite{BS}. We refer to \cite[Lecture 2]{EL} for a brief summary. 

\medskip

We fix an $(r+1)$-dimensional vector space $V$ over $\kk$ and consider the polynomial ring $S:=\Sym V =\kk[x_0, \ldots, x_r]$. A finitely generated graded Cohen--Macaulay $S$-module $P$ is called \emph{pure} if its graded minimal free resolution is a pure resolution, which is an exact sequence of the form
$$
0 \longleftarrow P \longleftarrow S(-d_0)^{b_0} \longleftarrow S(-d_1)^{b_1} \longleftarrow \cdots \longleftarrow S(-d_{\ell-1})^{b_{\ell-1}} \longleftarrow S(-d_{\ell})^{b_{\ell}} \longleftarrow 0
$$
with $\ell \leq r+1$, i.e., the betti table of $P$ has a single nonzero entry in each column.
Note that every pure resolution can be associated to a \emph{degree sequence} $(d_0, d_1, \ldots, d_{\ell-1}, d_{\ell})$ of length $\ell$, which is a strictly increasing sequence of $\ell+1$ many integers 
$$
d_0 < d_1 < \cdots < d_{\ell-1} < d_{\ell}.
$$
Eisenbud--Schreyer \cite{ES} proved that there exists a pure module whose graded minimal free resolution is the pure resolution corresponding to any given degree sequence. 

\medskip

Let $d:=(d_0, d_1, \ldots, d_{\ell-1}, d_{\ell})$ be a degree sequence of length $\ell$, and $P_d$ be a pure module whose graded minimal free resolution is a pure resolution corresponding to $d$.
Note that $\kappa_{p,q}(P_d, V) \neq 0$ if and only if $q=d_p-p$. Using Herzog--K\"{u}hl equations (see \cite[Proposition 2.1.4]{EL}), one can determine the nonzero graded betti numbers of $P_d$ up to scaling:
$$
\kappa_{p,d_p-p}(P_d, V)=\kappa_{0,d_0}(P_d, V) \cdot \prod_{\substack{k \geq 1 \\ k \neq p}} \frac{d_k-d_0}{|d_k - d_p|}~~\text{ for $p \geq 1$}.
$$
We denote by $\pi(d)$ the normalized betti table of $P_d$ so that the nonzero entry of the $0$-th column is $\kappa_{0,d_0}(d):=1$ and the nonzero entry of the $p$-th column is 
$$
\kappa_{p,d_p-p}(d):=\prod_{\substack{k \geq 1 \\ k \neq p}} \frac{d_k-d_0}{|d_k - d_p|}~~\text{ for $p \geq 1$}.
$$
Here we write $\kappa_{p,q}(d)$ for the $(p,q)$-entry of $\pi(d)$. The multiplicity of the ``virtual'' pure module whose betti table is the normalized betti table $\pi(d)$ is 
$$
e(d):=\frac{1}{\ell!}  \prod_{k \geq 1} (d_k-d_0).
$$

\begin{example}\label{ex:deqpure}
Consider the degree sequence $d^{e,q}:=(0,q+1, q+2, \ldots, q+e-1, q+e)$ of length $e$. Then
$$
\kappa_{p,q}(d^{e,q}) = {p+q-1 \choose q} {e+q \choose p+q}~~\text{ for $1 \leq p \leq e$}
~\text{ and }~
e(d^{e,q}) = {e+q \choose q}.
$$
The betti table $\pi(d^{e,q})$ is the following:
\begin{center}
\texttt{ \begin{tabular}{c|cccccc}
         & $0$ & $1$ & $2$ & $\cdots$ & $e-1$ & $e$ \\ \hline
    $0$    & $1$ & $-$ &  $-$ & $\cdots$ & $-$ &$-$ \\
    $1$   & $-$ & $-$ & $-$ & $\cdots$ & $-$ & $-$ \\
    $\vdots$ & $\vdots$ &  $\vdots$ &  $\vdots$ &  $\ddots$ &  $\vdots$ &  $\vdots$ \\
    $q-1$ & $-$ & $-$ & $-$ & $\cdots$ & $-$ & $-$ \\
    $q$ & $-$ & ${q \choose q}{e+q \choose 1+q}$ & ${q+1 \choose q} {e+q \choose 2+q}$  & $\cdots$ & ${e+q-2 \choose q} {e+q \choose e-1+q}$ & ${e+q-1 \choose q} {e+q \choose e+q}$ 
\end{tabular}}
\end{center}

\smallskip

\noindent This is the betti table of the $q$-secant variety of a variety of minimal degree. 
\end{example}

Let $M$ be a finitely generated graded $S$-module, and $\pi(M)$ be the betti table of $M$. Eisenbud--Schreyer \cite{ES} and Boij--S\"{o}derberg \cite{BS} proved that there are unique positive rational numbers $x_1, \ldots, x_m$ and degree sequences $d^1, \ldots, d^m$ of length at least $r+1-\dim(M)$ such that
$$
\pi(M)=\sum_{i=1}^m x_i \pi(d^i).
$$
This is called the \emph{Boij--S\"{o}derberg decomposition} of $\pi(M)$. 
Suppose that $d^1, \ldots, d^{m'}$ have length $r+1-\dim(M)$ and $d^{m'+1}, \ldots, d^m$ have length at least $r+2-\dim(M)$. Then the multiplicity of $M$ is given by
$$
e(M) =  \sum_{i=1}^{m'}  x_i e(d^i).
$$

\begin{example}\label{ex:projv_2(P^2)}
$(1)$ Consider the betti table of the image of a general projection of the second Veronese surface to $\nP^4$:
\begin{center}
\texttt{ \begin{tabular}{c|ccccc}
         & $0$ & $1$ & $2$ & $3$ & $4$\\ \hline
    $0$    & $1$ & $-$ &  $-$ & $-$ &$-$ \\
    $1$   & $-$ & $-$ & $-$ & $-$ & $-$ \\
    $2$ & $-$ & $7$ & $10$ &  $5$ & $1$ 
\end{tabular}}
\end{center}
The Boij--S\"{o}derberg decomposition is
$$
\frac{2}{3}\pi(d^{2,2}) + \frac{7}{30} \pi(d^{3,2}) + \frac{1}{10} \pi(d^{4,2}).
$$
$(2)$ Consider the betti table of a non-degenerate union of a plane nodal cubic and a smooth conic in $\nP^4$ (see \cite[Example 5.5]{HK}):
\begin{center}
\texttt{ \begin{tabular}{c|cccc}
         & $0$ & $1$ & $2$ & $3$  \\ \hline
    $0$    & $1$ & $-$ &  $-$ & $-$  \\
    $1$   & $-$ & $5$ & $6$ & $2$  \\
    $2$ & $-$ & $1$ & $2$ &  $1$  
\end{tabular}}
\end{center}
Put $\widetilde{d}^{3,1}:=(0,2,3,5)$ and $d:=(0,2,4,5)$.
The Boij--S\"{o}derberg decomposition is
$$
\frac{2}{3}\pi(d^{3,1}) + \frac{2}{15} \pi(\widetilde{d}^{3,1}) +\frac{1}{10} \pi(d)+ \frac{1}{10} \pi(d^{3,2}).
$$
\end{example}

The following easy lemma plays a key role in the proof of Theorem \ref{thm:main}.

\begin{lemma}\label{lem:bs-bound}
Let $d:=(0, d_1, d_2, \ldots, d_e)$ be degree sequence of length $e$ with $d_1 \geq q+1$ Then
$$
\kappa_{p,q}(d) \leq \underbrace{{p+q-1 \choose q} {e+q \choose p+q}}_{=\kappa_{p,q}(d^{e,q})}~~\text{ for $1 \leq p \leq e$}~\text{ and }~
e(d) \geq \underbrace{{e+q \choose q}}_{=e(d^{e,q})}.
$$
Furthermore, the following are equivalent:
\begin{enumerate}
\item $\kappa_{p,q}(d) = {p+q-1 \choose q} {e+q \choose p+q}$ for all $p$ with $1 \leq p \leq e$.
\item $\kappa_{p,q}(d) = {p+q-1 \choose q} {e+q \choose p+q}$ for some $p$ with $1 \leq p \leq e$.
\item $e(d) = {e+q \choose q}$.
\item $d=d^{e,q}$.
\end{enumerate}
\end{lemma}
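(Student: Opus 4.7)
The plan is to work directly from the explicit product formula
$$
\kappa_{p,q}(d) = \prod_{\substack{k \geq 1 \\ k \neq p}} \frac{d_k}{|d_k - d_p|}
$$
(the normalized form recalled in Section \ref{sec:bs}, using $d_0=0$) together with the elementary observation that the hypothesis $d_1 \geq q+1$ combined with strict monotonicity of the sequence forces $d_k \geq q+k$ for every $k \geq 1$. Since a pure resolution has a unique nonzero entry in column $p$, namely in row $d_p - p$, the bound on $\kappa_{p,q}(d)$ is trivial unless $d_p = p+q$, so I may assume this equality. Together with $d_k \geq q+k$ and $d_k < d_p = p+q$ for $k < p$, this pins down $d_k = q+k$ for all $1 \leq k \leq p$, making the factors for $k < p$ identical to those in $\kappa_{p,q}(d^{e,q})$.

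The main step is then the factors with $k > p$. Here $d_k \geq q+k > p+q = d_p$, and I would use that the function $x \mapsto x/(x-(p+q))$ is strictly decreasing on $(p+q, \infty)$ to conclude
$$
\frac{d_k}{d_k - (p+q)} \leq \frac{q+k}{k-p},
$$
with equality iff $d_k = q+k$. Multiplying over all $k > p$ and combining with the $k \leq p-1$ factors yields $\kappa_{p,q}(d) \leq \kappa_{p,q}(d^{e,q})$, with equality (given $d_p = p+q$) iff $d_k = q+k$ for every $k$, that is, iff $d = d^{e,q}$.

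The multiplicity bound is immediate from the same pointwise inequality $d_k \geq q+k$:
$$
e(d) = \frac{1}{e!} \prod_{k=1}^e d_k \geq \frac{1}{e!}\prod_{k=1}^e (q+k) = \binom{e+q}{q},
$$
with equality iff $d = d^{e,q}$. The equivalences then follow formally: (4)$\Rightarrow$(1) is the computation recorded in Example \ref{ex:deqpure}; (1)$\Rightarrow$(2) is trivial; (2)$\Rightarrow$(4) uses the equality analysis above, since the right-hand side $\binom{p+q-1}{q}\binom{e+q}{p+q}$ is nonzero for $1 \leq p \leq e$, so equality forces $d_p = p+q$ and then the monotonicity argument forces $d_k = q+k$ for every $k > p$; and (3)$\Leftrightarrow$(4) is the equality case in the multiplicity bound. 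The only genuine content is the one-variable monotonicity of $x/(x-(p+q))$; everything else is bookkeeping, and I do not anticipate any real obstacle.
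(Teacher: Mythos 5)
Your argument is correct and follows essentially the same route as the paper: reduce to the case $d_p = p+q$ (since otherwise $\kappa_{p,q}(d)=0$), use $d_k \geq q+k$ to compare each factor of the normalized Herzog--K\"{u}hl product with the corresponding factor for $d^{e,q}$, and track the equality cases to get the equivalences. The only cosmetic difference is that you invoke monotonicity of $x \mapsto x/(x-(p+q))$ where the paper writes $d_k = q+k+a$ with $a \geq 0$ and compares directly; one small imprecision is that for $k<p$ the equality $d_k = q+k$ follows from the chain of strict inequalities giving $d_k \le d_p - (p-k) = q+k$, not merely from $d_k < d_p$.
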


\begin{proof}
See Example \ref{ex:deqpure} for the values of $\kappa_{p,q}(d^{e,q})$ and $e(d^{e,q})$. From this, we also get $(4) \Longrightarrow (1)$.
Since $d_1 \geq q+1$, it follows that $d_k \geq q+k$ for all $k \geq 1$. 
Then
$$
e(d)=\frac{1}{e!} \prod_{k \geq 1} d_k \geq \frac{1}{e!} \prod_{k \geq 1} (q+k) = e(d^{e,q}), 
$$
and the equality holds if and only if $d=d^{e,q}$. In particular, $(3) \Longleftrightarrow (4)$. Now, notice that $\kappa_{p,q}(d) \neq 0$ if and only if $d_p=q+p$. Assume that $d_p=q+p$ for $1 \leq p \leq m$. 
When $1 \leq k < p \leq m$, we have
$$
\frac{d_k}{d_p-d_k} = \frac{q+k}{p-k} = \frac{q+k}{|k-p|}.
$$
When $1 \leq p < k \leq e$ and $p \leq m$, we have $d_p=q+p$ and $d_k=q+k+a$ for some integer $a \geq 0$ so that
$$
\frac{d_k}{d_k-d_p} = \frac{q+k+a}{(k-p)+a} \leq \frac{q+k}{k-p} = \frac{q+k}{|k-p|}
$$
and the equality holds if and only if $d_k=q+k$.
Thus
$$
\kappa_{p,q}(d)=\prod_{\substack{k \geq 1 \\ k \neq p}} \frac{d_k}{|d_k - d_p|} \leq \prod_{\substack{k \geq 1 \\ k \neq p}} \frac{q+k}{|k-p|} = \kappa_{p,q}(d^{e,q})~~\text{ for $1 \leq p \leq m$},
$$
and the equality holds for some $p$ with $1 \leq p \leq m$ if and only if $d=d^{e,q}$. This also proves $(2) \Longleftrightarrow (4)$. We have shown that 
$$
(2) \Longleftrightarrow (3) \Longleftrightarrow (4) \Longrightarrow (1).
$$
The implication $(1) \Longrightarrow (2)$ is trivial.
\end{proof}

\section{Proof of Main Theorem}\label{sec:proof}

This section is devoted to the proof of Theorem \ref{thm:main}. Let $X \subseteq \nP V = \nP^r$ be a non-degenerate projective scheme of dimension $n \geq 0$ and codimension $e \geq 1$.

\begin{lemma}\label{lem:thmforaCM}
Theorem \ref{thm:main} holds when $X \subseteq \nP^r$ is arithmetically Cohen--Macaulay.  
\end{lemma}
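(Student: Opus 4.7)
The plan is to apply Boij--S\"{o}derberg theory directly to $M := S(X)$. Since $X$ is arithmetically Cohen--Macaulay, $S(X)$ is a Cohen--Macaulay $S$-module of projective dimension exactly $e$, so by positivity its Boij--S\"{o}derberg decomposition takes the form
\[
\pi(S(X)) \;=\; \sum_{i=1}^m x_i\, \pi(d^i), \qquad x_i > 0,
\]
with each $d^i = (d^i_0, d^i_1, \ldots, d^i_e)$ a degree sequence of length exactly $e$. Because $\kappa_{0,0}(S(X), V) = 1$ while $\kappa_{0, j}(S(X), V) = 0$ for $j \geq 1$, positivity of each summand forces $d^i_0 = 0$ for every $i$ and $\sum_i x_i = 1$.

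Next I would translate property $\ND(q)$ into a numerical constraint on $d^i_1$. Under the aCM assumption, $\ND(q)$ is equivalent to $H^0(\nP^r, \mathscr{I}_{X/\nP^r}(q)) = 0$, and multiplying sections by linear forms propagates this down to $H^0(\nP^r, \mathscr{I}_{X/\nP^r}(j)) = 0$ for every $1 \leq j \leq q$, equivalently $\kappa_{1, j}(S(X), V) = 0$ for $0 \leq j \leq q-1$. Since the unique nonzero entry in the first column of $\pi(d^i)$ lies in row $d^i_1 - 1$, positivity now forces $d^i_1 \geq q+1$ for every $i$. Thus every $d^i$ satisfies the hypothesis of Lemma \ref{lem:bs-bound}.

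The upper bound is then immediate: taking the convex combination of the bounds of Lemma \ref{lem:bs-bound} gives
\[
\kappa_{p,q}(S(X), V) \;=\; \sum_i x_i\, \kappa_{p,q}(d^i) \;\leq\; \binom{p+q-1}{q}\binom{e+q}{p+q},
\]
and $\kappa_{p,q}(S(X), V) = 0$ for $p \geq e+1$ follows from the width $e$ of each $\pi(d^i)$. For the equivalences, the implications $(4) \Rightarrow (1) \Rightarrow (2)$ and $(4) \Rightarrow (3)$ are read off directly from the shape of the resolution. For $(2) \Rightarrow (4)$, any equality in the convex combination above combined with the termwise inequality of Lemma \ref{lem:bs-bound} and positivity of the $x_i$ forces $\kappa_{p,q}(d^i)$ to attain its maximum for every $i$; Lemma \ref{lem:bs-bound} then yields $d^i = d^{e,q}$ for every $i$, so the decomposition collapses to $\pi(S(X)) = \pi(d^{e,q})$, which is $(4)$. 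For $(3) \Rightarrow (4)$, I would invoke the multiplicity identity $\deg X = e(S(X)) = \sum_i x_i\, e(d^i)$ together with $e(d^i) \geq \binom{e+q}{q}$ from Lemma \ref{lem:bs-bound}: the equality $\deg X = \binom{e+q}{q}$ forces each $d^i = d^{e,q}$ by the same rigidity.

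The key conceptual step is the translation of $\ND(q)$ into the clean numerical condition $d^i_1 \geq q+1$ on every pure summand; once this is in hand, the entire aCM case of Theorem \ref{thm:main} reduces to a coefficientwise comparison against the single extremal degree sequence $d^{e,q}$, with both the bound and the characterization of the equality case delivered by Lemma \ref{lem:bs-bound}.
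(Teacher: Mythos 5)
Your proposal is correct and follows essentially the same route as the paper's proof: write the Boij--S\"{o}derberg decomposition of $\pi(S(X))$ with degree sequences of length $e$ (using the aCM hypothesis), observe that $d^i_0=0$, $\sum_i x_i=1$, and that $\ND(q)$ forces $d^i_1\geq q+1$, and then read off both the bound and the rigidity statements from Lemma \ref{lem:bs-bound}. The only difference is that you spell out the justification of $d^i_1\geq q+1$ and of the equality analysis in more detail than the paper does.
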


\begin{proof}
Consider Boij--S\"{o}derberg decomposition 
$$
\pi(S(X)) = \sum_{i=1}^m x_i \pi(d^i),
$$
where $x_1, \ldots, x_m$ are positive rational numbers with $\sum_{i=1}^m x_i = 1$ and $d^1, \ldots, d^m$ are degree sequences of length $e$. As $X \subseteq \nP^r$ satisfies property $\ND(q)$, we have $d_1^i \geq q+1$ for all $1 \leq i \leq m$. Then the assertion follows from Lemma \ref{lem:bs-bound}. Here it is clear that $\kappa_{p,q}(S(X), V) = 0$ for either $p=0$ or $p \geq e+1$. 
\end{proof}

The following is shown in \cite[Lemma 2.19 and Theorem 2.20]{AN} (see also \cite[Remark 3.1]{NP}), but we include the proof for reader's convenience. For the second assertion, property $\ND(q)$ plays a crucial role.

\begin{lemma}\label{lem:reduction}
Assume that $n \geq 1$. For a general element $t \in V$, let $\overline{X}:=Z(t)$ and $\overline{V}:=V/\langle t \rangle$ so that $\overline{X}\subseteq \nP \overline{V}= \nP^{r-1}$ is a non-degenerate projective scheme of dimension $n-1$ and codimension $e \geq 1$. 
\begin{enumerate}
\item[$(a)$] Set $I_t:=(I_{X/\nP^r} + (t)) /(t) \subseteq I_{\overline{X}/\nP^{r-1}}$ and $S_t:=\Sym \overline{V}/I_t$. 
Then 
$$
K_{p,q}(S(X), V) = K_{p,q}(S_t, \overline{V})~~\text{ for all $p \geq 0$ and $q \geq 0$}.
$$
\item[$(b)$] Suppose that $X \subseteq \nP^r$ satisfies property $\ND(q)$ for an integer $q \geq 1$. Then there is an injective map $K_{p,q}(S(X), V) \hookrightarrow K_{p,q}(S(\overline{X}), \overline{V})$ for every $p \geq 0$. In particular,
$$
\kappa_{p,q}(S(X), V) \leq \kappa_{p,q}(S(\overline{X}), \overline{V})~~\text{ for all $p \geq 0$}.
$$
\end{enumerate}
\end{lemma}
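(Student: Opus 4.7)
For $(a)$, the strategy is a standard change-of-rings argument applied to the quotient map $S \twoheadrightarrow \overline{S} := S/tS$. The first step would be to check that for a general $t \in V$, multiplication by $t$ is injective on $S(X)$: since the saturated ideal $I_{X/\nP^r}$ has no associated prime equal to the irrelevant ideal $\mathfrak{m}$, the set of such $t$ forms a dense Zariski open subset of $V$. For such a $t$ we have $\operatorname{Tor}_i^S(S(X), \overline{S}) = 0$ for $i \geq 1$ and $\operatorname{Tor}_0^S(S(X), \overline{S}) = S(X)/tS(X) = S_t$. The change-of-rings spectral sequence
$$
\operatorname{Tor}_p^{\overline{S}}\bigl(\operatorname{Tor}_q^S(S(X), \overline{S}), \kk\bigr) \Longrightarrow \operatorname{Tor}_{p+q}^S(S(X), \kk)
$$
then collapses onto its bottom row, yielding a graded isomorphism $\operatorname{Tor}_p^S(S(X), \kk) \cong \operatorname{Tor}_p^{\overline{S}}(S_t, \kk)$. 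Extracting the degree-$(p+q)$ piece is exactly the Koszul-cohomology identity asserted in $(a)$.

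\medskip

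For $(b)$, I would combine $(a)$ with a short exact sequence. Because any form vanishing on $X$ restricts to one vanishing on $\overline{X} = X \cap Z(t)$, we have $I_t \subseteq I_{\overline{X}/\nP^{r-1}}$ in $\overline{S}$, giving a surjection $S_t \twoheadrightarrow S(\overline{X})$ with kernel $Q := I_{\overline{X}/\nP^{r-1}}/I_t$. The resulting short exact sequence
$$
0 \longrightarrow Q \longrightarrow S_t \longrightarrow S(\overline{X}) \longrightarrow 0
$$
of graded $\overline{S}$-modules induces a long exact sequence in Koszul cohomology
$$
\cdots \longrightarrow K_{p,q}(Q, \overline{V}) \longrightarrow K_{p,q}(S_t, \overline{V}) \longrightarrow K_{p,q}(S(\overline{X}), \overline{V}) \longrightarrow \cdots,
$$
so to obtain the claimed injection $K_{p,q}(S(X), V) \hookrightarrow K_{p,q}(S(\overline{X}), \overline{V})$ it suffices to establish $K_{p,q}(Q, \overline{V}) = 0$ for every $p \geq 0$ and then to invoke $(a)$.

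\medskip

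This last vanishing is precisely where property $\ND(q)$ enters. Taking $t$ general enough that $\Lambda := \nP\overline{V} = Z(t)$ serves as a general linear subspace of dimension $r-1 \geq \codim X$ (using $n \geq 1$), property $\ND(q)$ gives
$$
(I_{\overline{X}/\nP^{r-1}})_q \,=\, H^0(\nP^{r-1}, \mathscr{I}_{\overline{X}/\nP^{r-1}}(q)) \,=\, 0,
$$
and therefore $Q_q = 0$. Since $K_{p,q}(Q, \overline{V})$ is the middle cohomology of the complex $\wedge^{p+1}\overline{V} \otimes Q_{q-1} \to \wedge^p \overline{V} \otimes Q_q \to \wedge^{p-1}\overline{V} \otimes Q_{q+1}$ whose middle term is now zero, we conclude $K_{p,q}(Q, \overline{V}) = 0$ for all $p \geq 0$. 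The only delicate point I foresee is that the two genericity conditions on $t$ -- nonzerodivisor on $S(X)$ for $(a)$ and sufficient generality of the hyperplane $Z(t)$ for $\ND(q)$ in $(b)$ -- must hold simultaneously, but each cuts out a dense Zariski open subset of $V$, so any $t$ in their common intersection will do.
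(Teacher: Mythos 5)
Your proof is correct, and part $(b)$ coincides with the paper's argument: the same short exact sequence $0 \to K_t \to S_t \to S(\overline{X}) \to 0$, the same use of property $\ND(q)$ applied to the general hyperplane $\nP\overline{V}$ (this is where $n \geq 1$ enters, exactly as you note) to kill the degree-$q$ piece of the kernel and hence its Koszul cohomology in weight $q$, and the same reading of injectivity off the long exact sequence combined with $(a)$. Where you diverge is in part $(a)$: the paper fixes a splitting $V = \overline{V} \oplus \langle t \rangle$ to get $K_{p,q}(S_t, V) = K_{p,q}(S_t, \overline{V}) \oplus K_{p-1,q}(S_t, \overline{V})$, runs the Koszul long exact sequence of $0 \to S(X)(-1) \to S(X) \to S_t \to 0$ using the fact that multiplication by $t$ acts as zero on Koszul cohomology, and concludes by induction on $p$; you instead invoke the change-of-rings spectral sequence for $S \twoheadrightarrow S/tS$, which degenerates because $t$ is a nonzerodivisor on $S(X)$, yielding $\operatorname{Tor}_p^{S}(S(X), \kk) \cong \operatorname{Tor}_p^{S/tS}(S_t, \kk)$ as graded vector spaces, and then extract the degree-$(p+q)$ strand. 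Both routes prove the same standard fact that graded betti numbers are preserved under reduction by a regular linear form; yours is shorter but imports the spectral sequence as a black box, while the paper's stays entirely inside the Koszul-cohomology formalism used throughout. Your ancillary points --- that a general $t$ avoids the finitely many associated primes of the saturated ring $S(X)$ and is therefore a nonzerodivisor, and that the finitely many genericity conditions on $t$ can be imposed simultaneously --- are correct and complete the argument.
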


\begin{proof}
$(a)$ Fixing a splitting $V = \overline{V} \oplus \langle t \rangle$, we have an identification
$$
K_{p,q}(S_t, V) = K_{p,q}(S_t, \overline{V}) \oplus K_{p-1,q}(S_t, \overline{V}).
$$
Note that $t$ is a nonzero divisor on $S(X)$. Thus we have a short exact sequence
$$
0 \longrightarrow S(X)(-1) \xrightarrow{~\cdot t~} S(X) \longrightarrow S_t \longrightarrow 0,
$$
which induces a long exact sequence
$$
\cdots \longrightarrow K_{p,q-1}(S(X), V) \xrightarrow{~\cdot t~} K_{p,q}(S(X), V) \longrightarrow K_{p,q}(S_t, V) \longrightarrow K_{p-1,q}(S(X), V) \longrightarrow \cdots.
$$
Notice that the map $K_{p,q-1}(S(X), V) \to K_{p,q}(S(X), V)$ is always zero (see \cite[Example 5.1.6]{EL}). Thus we get
$$
K_{p,q}(S_t, \overline{V}) \oplus K_{p-1,q}(S_t, \overline{V}) = K_{p,q}(S(X), V) \oplus K_{p-1,q}(S(X), V).
$$
As $K_{0,q}(S(X), V) = K_{0,q}(S_t, \overline{V})$, the assertion follows by induction on $p$.

\medskip

\noindent $(b)$ There is a natural surjective map $S_t \to S(\overline{X})$. Noting that $K_t:=I_{\overline{X}/\nP^{r-1}}/I_t$ is the kernel of this map, we have a short exact sequence
$$
0 \longrightarrow K_t \longrightarrow S_t \longrightarrow S(\overline{X}) \longrightarrow 0
$$
of $\Sym \overline{V}$-modules. This induces a long exact sequence
$$
\cdots \longrightarrow K_{p,q}(K_t, \overline{V}) \longrightarrow K_{p,q}(S_t, \overline{V}) \longrightarrow K_{p,q}(S(\overline{X}), \overline{V}) \longrightarrow K_{p-1,q+1}(K_t, \overline{V}) \longrightarrow \cdots.
$$
Since $X \subseteq \nP^r$ satisfies property $\ND(q)$, it follows that $K_t(\ell)=0$ for $\ell \leq q$. In particular, $K_{p,q}(K_t, \overline{V}) = 0$. Thus the map $K_{p,q}(S_t, \overline{V}) \to K_{p,q}(S(\overline{X}), \overline{V})$ is injective for every $p \geq 0$. Recall from $(a)$ that $K_{p,q}(S(X), V) = K_{p,q}(S_t, \overline{V})$. 
\end{proof}

Let $\Lambda_i \subseteq \nP^r$ be a general linear subspace of dimension $e+i$ for each $0 \leq i \leq n$ with 
$$
\Lambda_0 \subseteq \Lambda_1 \subseteq \cdots \subseteq \Lambda_{n-1} \subseteq \Lambda_n = \nP^r.
$$
We may write $\Lambda_i = \nP V_i = \nP^{e+i}$ for some quotient space $V_i$ of $V$. Set 
$$
X_i:=X \cap \Lambda_i \subseteq \Lambda_i = \nP^{e+i} = \nP V_i.
$$
Then $\dim X_i = i$. We assume from now on that $X \subseteq \nP^r$ satisfies property $\ND(q)$ for a given integer $q \geq 1$. Then $X_i \subseteq \nP^{e+i}$ also satisfies property $\ND(q)$ for every $1 \leq i \leq n$. 

\medskip

Applying Lemma \ref{lem:reduction} successively, we have
$$
\kappa_{p,q}(S(X), V)\leq \kappa_{p,q}(S(X_{n-1}), V_{n-1}) \leq \cdots \leq \kappa_{p,q}(S(X_1), V_1) \leq \kappa_{p,q}(S(X_0), V_0)
$$
for all $p \geq 0$. It is well-known that the homogeneous coordinate ring of a zero-dimensional subscheme of projective space is Cohen--Macaulay. Thus $X_0 \subseteq \nP^e$ is arithmetically Cohen--Macaulay. By Lemma \ref{lem:thmforaCM}, we obtain
$$
\kappa_{p,q}(S(X), V) \leq \kappa_{p,q}(S(X_0), V_0) \leq {p+q-1 \choose q} {e+q \choose p+q}~~\text{ for all $p \geq 0$}.
$$
It only remains to verify the equivalences in Theorem \ref{thm:main}. The following implications
$$
(3) \Longleftrightarrow (4) \Longrightarrow (1) \Longrightarrow (2)
$$
hold by Lemma \ref{lem:thmforaCM}.  Thus we only need to confirm $(2) \Longrightarrow (3)$. 

\begin{lemma}\label{lem:X_1aCM}
Assume that $n \geq 1$. For a general element $t \in V$, let $\overline{X}:=Z(t)$ and $\overline{V}:=V/\langle t \rangle$ so that $\overline{X}\subseteq \nP \overline{V}= \nP^{r-1}$ is a non-degenerate projective scheme of dimension $n-1$ and codimension $e \geq 1$. 
Suppose that the following hold:
\begin{enumerate}
    \item[$(a)$] $X \subseteq \nP^{r}$ satisfies property $\ND(q)$ for an integer $q \geq 1$.
    \item[$(b)$] $\kappa_{p_0,q}(S(X), V) = \kappa_{p_0, q}(S(\overline{X}), \overline{V}) \neq 0$ for some $p_0$ with $1 \leq p_0 \leq e$.
    \item[$(c)$] $\kappa_{p,\ell}(S(\overline{X}), \overline{V})=0$ for $0 \leq p \leq p_0$ and $\ell \geq q+1$.
\end{enumerate}
Then $\kappa_{i,j}(S(X), V) = \kappa_{i,j}(S(\overline{X}), \overline{V})$ for all $i \geq 0$ and $j \geq 0$. In particular, if furthermore $\overline{X} \subseteq \nP^{r-1}$ is arithmetically Cohen--Macaulay, then so is $X \subseteq \nP^{r}$.
\end{lemma}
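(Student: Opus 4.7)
The plan is to prove the equivalent statement $K_t = 0$, where $K_t := I_{\overline X/\nP^{r-1}}/I_t$ sits in the short exact sequence of $\Sym\overline V$-modules
\[
0 \to K_t \to S_t \to S(\overline X) \to 0.
\]
Once $K_t = 0$, Lemma~\ref{lem:reduction}(a) identifies $K_{i,j}(S(X),V) = K_{i,j}(S_t,\overline V) = K_{i,j}(S(\overline X),\overline V)$ for all $i,j$. The main tool is the long exact sequence of Koszul cohomology attached to this SES,
\[
\cdots \to K_{p,j}(K_t,\overline V) \to K_{p,j}(S_t,\overline V) \to K_{p,j}(S(\overline X),\overline V) \xrightarrow{\delta} K_{p-1,j+1}(K_t,\overline V) \to \cdots,
\]
which will be combined with the three hypotheses in a cascade of vanishings.

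First, hypothesis (a) forces $(K_t)_\ell = 0$ for $\ell \le q$, so $K_{p,j}(K_t,\overline V) = 0$ whenever $j \le q$; this immediately recovers the injection $K_{p,q}(S_t) \hookrightarrow K_{p,q}(S(\overline X))$ from Lemma~\ref{lem:reduction}(b). Combined with (b) and Lemma~\ref{lem:reduction}(a), the injection at $(p_0,q)$ becomes an isomorphism, so the connecting map $\delta_{p_0,q}$ vanishes. Next, I would use (c) to show that $K_t$ is generated in degree $q+1$ as an $\Sym\overline V$-module: for $j \ge q+2$, the LES at position $(0,j)$ reads
\[
K_{1,j-1}(S(\overline X),\overline V) \to K_{0,j}(K_t,\overline V) \to K_{0,j}(S_t,\overline V) \to K_{0,j}(S(\overline X),\overline V),
\]
whose outer terms vanish by (c) (applied with $p=1 \le p_0$ and $j-1 \ge q+1$, and with $p=0 \le p_0$ respectively), while $K_{0,j}(S_t,\overline V) = K_{0,j}(S(X),V) = 0$ because $S(X)$ is generated in degree $0$. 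Thus $K_{0,j}(K_t,\overline V) = 0$ for $j \ge q+2$, and together with $(K_t)_q = 0$ this means $K_t$ has minimal generators only in degree $q+1$.

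The critical step—and the main obstacle—is then to show $(K_t)_{q+1} = 0$. The LES at $(0,q+1)$, after substituting $K_{1,q}(K_t) = 0$, $K_{0,q+1}(S_t,\overline V) = K_{0,q+1}(S(X),V) = 0$, and $K_{0,q+1}(S(\overline X),\overline V) = 0$ from (c), produces
\[
0 \to K_{1,q}(S(X),V) \to K_{1,q}(S(\overline X),\overline V) \to (K_t)_{q+1} \to 0,
\]
so $(K_t)_{q+1} = 0$ is equivalent to $\kappa_{1,q}(S(X),V) = \kappa_{1,q}(S(\overline X),\overline V)$. To propagate the equality from $p = p_0$ (furnished by (b)) down to $p = 1$, I would perform a staircase descent along the LES: from $\delta_{p_0,q} = 0$ and the vanishings in (c), one inductively deduces $K_{p_0-i,q+i}(K_t,\overline V) = K_{p_0-i,q+i}(S(X),V)$ for $1 \le i \le p_0$, and then uses the rigidity coming from the generation of $K_t$ in a single degree (so that every $K_{p,q+1}(K_t,\overline V)$ is computable from $(K_t)_{q+1}$ via the Koszul complex on $\overline V$) to force the defect $\kappa_{1,q}(S(\overline X)) - \kappa_{1,q}(S(X))$ to vanish. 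I expect this rigidity argument to be the delicate part of the proof.

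Once $K_t = 0$, all graded betti numbers of $S(X)$ and $S(\overline X)$ agree. For the ``in particular'' assertion, equality of betti tables forces $\mathrm{pd}\,S(X) = \mathrm{pd}\,S(\overline X)$; if $\overline X \subseteq \nP^{r-1}$ is aCM then $\mathrm{pd}\,S(\overline X) = e$ by Auslander--Buchsbaum, hence $\mathrm{depth}\,S(X) = r+1-e = \dim S(X)$, i.e.\ $X \subseteq \nP^r$ is aCM as well.
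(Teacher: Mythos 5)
Your overall strategy coincides with the paper's: reduce everything to showing $K_t=0$, note that $(K_t)_{\ell}=0$ for $\ell\le q$ by property $\ND(q)$, kill the generators of $K_t$ in degrees $\ge q+2$ using hypothesis $(c)$ together with $K_{0,j}(S_t,\overline{V})=K_{0,j}(S(X),V)=0$ for $j\ge 1$, and correctly isolate the crux as $(K_t)_{q+1}=0$, equivalently $\kappa_{1,q}(S(X),V)=\kappa_{1,q}(S(\overline{X}),\overline{V})$. All of that is accurate and matches the paper's proof.

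The gap is in the crucial descent from $p_0$ to $1$. The long exact sequence only relates position $(p,j)$ to position $(p-1,j+1)$, so the ``staircase'' you start from $\delta_{p_0,q}=0$ travels along the anti-diagonal $(p_0-i,\,q+i)$ and never returns to the row $j=q$; when it reaches $p=0$ it only re-proves that $K_t$ has no minimal generators in degree $q+p_0$, which you already established. The appeal to ``rigidity'' of a module generated in a single degree does not close this: $K_{p,q+1}(K_t,\overline{V})$ is a subspace of $\wedge^{p}\overline{V}\otimes (K_t)_{q+1}$ that depends on the module structure of $K_t$, and saying it is ``computable from $(K_t)_{q+1}$'' yields nothing unless you already know $(K_t)_{q+1}=0$, which is precisely the conclusion sought. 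What is actually needed --- and what the paper supplies --- is a mechanism for moving horizontally within the row $q$: one compares the connecting maps $\varphi_{p_0}$ and $\varphi_{p_0-1}$ through a commutative diagram whose vertical arrows are the injective maps $K_{p,q}\to K_{p-1,q}\otimes\overline{V}$ induced by the differentials of the graded minimal free resolutions. Hypothesis $(c)$ guarantees that the vertical arrow on $K_{p_0,q}(S(\overline{X}),\overline{V})$ is the genuine differential $\rho_{p_0}$ of the minimal free resolution of $S(\overline{X})$, viewed as a matrix of linear forms, and a minimality argument on that matrix (after elementary operations, each row index admits an entry not contained in the span of the remaining entries of its column) converts the relation $(\varphi_{p_0-1}\otimes\operatorname{id}_{\overline{V}})\circ\rho_{p_0}=0$ into $\varphi_{p_0-1}=0$. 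Iterating down to $\varphi_1=0$ gives the $(1,q)$ equality and hence $(K_t)_{q+1}=0$. Without this (or an equivalent) ingredient your argument only succeeds in the special case $p_0=1$, where hypothesis $(b)$ already hands you the crux directly. Your concluding Auslander--Buchsbaum argument for the aCM statement is fine.
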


\begin{proof}
Set $I_t:=(I_{X/\nP^r} + (t)) /(t) \subseteq I_{\overline{X}/\nP^{r-1}}$ and $S_t:=\Sym \overline{V}/I_t$. 
By Lemma \ref{lem:reduction}, $K_{i,j}(S(X), V) = K_{i,j}(S_t, \overline{V})$ for all $i \geq 0$ and $j \geq 0$ and there is an injective map $K_{p,q}(S_t, \overline{V}) \hookrightarrow K_{p,q}(S(\overline{X}), \overline{V})$ for $p \geq 0$. First, we claim that $K_{1,q}(S_t, \overline{V}) = K_{1,q}(S(\overline{X}), \overline{V})$. In view of the condition $(b)$, we only need to consider the case that $K_{p_0,q}(S_t, \overline{V}) = K_{p_0,q}(S(\overline{X}), \overline{V})$ for some $p_0$ with $2 \leq p_0 \leq e$.
We have a short exact sequence
$$
0 \longrightarrow K_t \longrightarrow S_t \longrightarrow S(\overline{X}) \longrightarrow 0
$$
of $\Sym \overline{V}$-modules. Note that $K_{i,j}(K_t, \overline{V})=0$ for $i \geq 0$ and $0 \leq j \leq q$. 
We have a commutative diagram with exact rows
$$
\xymatrixcolsep{0.65in}
\xymatrix{
K_{p_0,q}(S_t, \overline{V}) \ar@{^{(}->}[r] \ar@{^{(}->}[d] &  K_{p_0,q}(S(\overline{X}), \overline{V}) \ar[r]^-{\varphi_{p_0}} \ar@{^{(}->}[d]^-{\rho_{p_0}} & K_{p_0-1,q+1}(K_t, \overline{V}) \ar@{^{(}->}[d]  \\
K_{p_0-1,q}(S_t, \overline{V}) \otimes \overline{V} \ar@{^{(}->}[r] &  K_{p_0-1,q}(S(\overline{X}), \overline{V}) \otimes \overline{V} \ar[r]^-{\varphi_{p_0-1} \otimes \operatorname{id}_{\overline{V}}} & K_{p_0-2,q+1}(K_t, \overline{V}) \otimes \overline{V},
}
$$
where the vertical injective maps are induced from the boundary maps in the graded minimal free resolutions (cf. \cite[Subsection 2.2]{AN}, \cite[Example 5.1.4]{EL}). Note that the map $\varphi_{p_0}$ is zero. By the condition $(c)$, $\rho_{p_0}$ is the boundary map in the graded minimal free resolution of $S(\overline{X})$. We may think that $\rho_{p_0} \colon \kk^b \to \kk^a \otimes \overline{V}$ is an $a \times b$ matrix of linear forms. Let $\ell_{ij} \in \overline{V}$ be the $(i,j)$-entry of $\rho_{p_0}$. By the minimality of the graded minimal free resolution, possibly after applying suitable elementary operations, we may assume that for each $1 \leq i \leq a$, there exists a nonzero $(i,j)$-entry $\ell_{ij}$ of $\rho_{p_0}$ for some $1 \leq j \leq b$ such that the space spanned by the remaining entries in the $j$-th column of $\rho_{p_0}$ does not contain $\ell_{ij}$. Then
$$
\rho_{p_0}(f_j) = \sum_{i=1}^a e_i \otimes \ell_{ij}, 
$$
where $e_1, \ldots, e_a$ is the standard basis of $\kk^a$ and $f_1, \ldots, f_b$ is the standard basis of $\kk^b$. By the commutativity of the above diagram, $\varphi_{p_0-1}(e_i)=0$. This means that $\varphi_{p_0-1}$ is zero so that $K_{p_0-1,q}(S_t, \overline{V}) = K_{p_0-1,q}(S(\overline{X}), \overline{V})$. Repeating the arguments, we finally get $K_{1,q}(S_t, \overline{V}) = K_{1,q}(S(\overline{X}), \overline{V})$ as claimed. Since $K_{0,q+1}(S_t, \overline{V})=0$, it follows that $K_{0,q+1}(K_t, \overline{V})=0$. 

\medskip

Next, for $\ell \geq q+1$, consider the exact sequence
$$
\cdots \longrightarrow K_{1,\ell}(S(\overline{X}), \overline{V}) \longrightarrow K_{0, \ell+1}(K_t, \overline{V}) \longrightarrow K_{0,\ell+1}(S_t, \overline{V}) \longrightarrow \cdots. 
$$
By assumption, $K_{1,\ell}(S(\overline{X}), \overline{V})=0$. Since $K_{0,\ell+1}(S_t, \overline{V})=0$, we have $K_{0,\ell+1}(K_t, \overline{V})=0$. Hence $K_{0,j}(K_t, \overline{V})=0$ for all $j \geq 0$, so $K_t=0$. Therefore,
\[
K_{p,q}(S(X), V) = K_{p,q}(S_t, \overline{V}) = K_{p,q}(S(\overline{X}), \overline{V})~~\text{ for all $p \geq 0$ and $q \geq 0$}. \qedhere
\]
\end{proof}

We go back to the proof of Theorem \ref{thm:main}.
Assume that $(2)$ holds. Recall that $X_0 \subseteq \nP^e$ is arithmetically Cohen--Macaulay. As the implication $(2) \Longrightarrow (3)$ for $X_0$ is known by Lemma \ref{lem:thmforaCM}, we assume $n \geq 1$. We have
$$
\kappa_{p,q}(S(X), V) =\kappa_{p,q}(S(X_0), V_0) = {p+q-1 \choose q} {e+q \choose p+q}
$$
for some $p$ with $1 \leq p \leq e$.
By Lemma \ref{lem:thmforaCM}, we get
$$
\deg(X)=\deg(X_0) =  {e+q \choose q}.
$$
Applying Lemma \ref{lem:X_1aCM} successively, we see that $X \subseteq \nP^r$ is arithmetically Cohen--Macaulay.

\section{Complements}\label{sec:complements}
In this section, we present some additional results. Following \cite{EGHP}, we say that a non-degenerate projective scheme $X \subseteq \nP V = \nP^r$ satisfies \emph{property $N_{d,m}$} if
$$
K_{p,q}(S(X), V) = 0~~\text{ for $0 \leq p \leq m$ and $q \geq d$}.
$$
First, we give a quick proof of the following theorem of Ahn--Han--Kwak \cite[Theorem 1.2 and Corollary 3.4]{AHK}.

\begin{theorem}
Let $X \subseteq \nP V = \nP^r$ be a non-degenerate projective scheme of codimension $e \geq 1$, and $q \geq 1$ be an integer.
\begin{enumerate}
\item If $X \subseteq \nP^r$ satisfies property $\ND(q)$, then $\deg(X) \geq {e+q \choose q}$.
\item If $X \subseteq \nP^r$ satisfies property $N_{q+1,e}$, then $\deg(X) \leq {e+q \choose q}$ and the equality holds if and only if the graded minimal free resolution of $S(X)$ is of the form in Theorem \ref{thm:main} $(4)$.
\end{enumerate}
In particular, if $X \subseteq \nP^r$ satisfies property $\ND(q)$ and $N_{q+1,e}$, then $\deg(X) ={e+q \choose q}$ and $X \subseteq \nP^r$ is arithmetically Cohen--Macaulay.
\end{theorem}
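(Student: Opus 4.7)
The overall plan is to read both inequalities off the Boij--S\"{o}derberg decomposition $\pi(S(X)) = \sum_i x_i \pi(d^i)$, in the same spirit as Section \ref{sec:proof}, and then combine them for the last claim.

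For $(1)$, I would first reduce to the zero-dimensional case by cutting $X$ with $n$ general hyperplanes, exactly as in the proof of Theorem \ref{thm:main}: this produces $X_0 \subseteq \nP^e$ of the same codimension and degree as $X$, still satisfying property $\ND(q)$. Because $X_0$ is zero-dimensional, it is arithmetically Cohen--Macaulay, so its Boij--S\"{o}derberg decomposition involves only degree sequences of length $e$, and the weights satisfy $\sum x_i = 1$ (read off the $(0,0)$-entry). Property $\ND(q)$ forces $d_1^i \geq q+1$ for every summand, so Lemma \ref{lem:bs-bound} gives $e(d^i) \geq \binom{e+q}{q}$. Summing then yields
\[
\deg(X) \;=\; \deg(X_0) \;=\; \sum_i x_i\, e(d^i) \;\geq\; \binom{e+q}{q}.
\]

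For $(2)$, I would instead apply the Boij--S\"{o}derberg decomposition directly to $S(X)$, with no hyperplane reduction. Positivity of the decomposition together with $\kappa_{0,q'}(S(X),V) = 0$ for $q' \geq 1$ forces $d_0^i = 0$ for every summand and $\sum x_i = 1$. Property $N_{q+1,e}$ translates into the column-wise constraint $d_p^i \leq p+q$ for $0 \leq p \leq \min(\ell_i, e)$, since the unique potentially nonzero entry of $\pi(d^i)$ in column $p$ sits in row $d_p^i - p$ and positivity prevents cancellation among the pure summands. For any summand of length $\ell_i = e$ this yields
\[
e(d^i) \;=\; \frac{1}{e!}\prod_{k=1}^{e} d_k^i \;\leq\; \frac{1}{e!}\prod_{k=1}^{e} (k+q) \;=\; \binom{e+q}{q},
\]
with equality iff $d^i = d^{e,q}$. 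Since only length-$e$ summands contribute to $e(S(X)) = \deg(X)$ and their weights sum to at most $1$, this gives $\deg(X) \leq \binom{e+q}{q}$. For equality, every summand must have length $e$ and equal $d^{e,q}$, forcing $\pi(S(X)) = \pi(d^{e,q})$ and hence the pure resolution of Theorem \ref{thm:main} $(4)$.

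The final assertion is then immediate: combining $(1)$ and $(2)$ forces $\deg(X) = \binom{e+q}{q}$, and the equality case of $(2)$ simultaneously delivers the pure resolution and the Cohen--Macaulay property, because a resolution of length $e = \codim X$ is of maximal possible length. I expect the main bookkeeping obstacle to be making the implication ``$N_{q+1,e}$ $\Rightarrow$ $d_p^i \leq p+q$ for all pure summands'' fully rigorous: one must carefully invoke positivity of the Boij--S\"{o}derberg decomposition to rule out any cancellation between the pure tables $\pi(d^i)$, after which the argument reduces to the elementary multiplicity bound above.
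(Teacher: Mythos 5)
Your proposal is correct and follows essentially the same route as the paper: part $(1)$ by cutting down to the arithmetically Cohen--Macaulay zero-dimensional section $X_0$ and applying the multiplicity lower bound of Lemma \ref{lem:bs-bound} to its Boij--S\"{o}derberg summands, and part $(2)$ by decomposing $\pi(S(X))$ directly, using $N_{q+1,e}$ to force $d_k^i \leq q+k$ and hence $e(d^i) \leq \binom{e+q}{q}$ for the length-$e$ summands, with the equality case pinning down $d^{e,q}$ as the unique summand. The only difference is that you spell out the positivity/no-cancellation justification for reading column constraints off the pure summands, which the paper leaves implicit.
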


\begin{proof}
$(1)$ Let $\Lambda_0 \subseteq \nP^r$ be a general linear subspace of dimension $e$. Then $X_0:=X \cap \Lambda_0 \subseteq \Lambda_0 = \nP^e$ satisfies property $\ND(q)$ and is arithmetically Cohen--Macaulay. Since every degree sequence appearing in the Boij--S\"{o}derberg decomposition of $S(X_0)$ satisfies the conditions of Lemma \ref{lem:bs-bound}, the assertion follows from Lemma \ref{lem:bs-bound}.

\medskip

\noindent $(2)$ Consider the Boij--S\"{o}derberg decomposition
$$
\pi(S(X)) = \sum_{i=1}^m x_i\pi(d^i),
$$
where $x_1, \ldots, x_m$ are positive rational numbers with $\sum_{i=1}^m x_i = 1$ and $d^1, \ldots, d^{m'}$ are degree sequences of length $e$ and $d^{m'+1}, \ldots, d^m$ are degree sequences of length at least $e+1$. If $1 \leq i \leq m'$, then $d_k^i \leq q+k$ for all $1 \leq k \leq e$ so that
$$
e(d^i) = \frac{1}{e!} \prod_{k \geq 1} d_k \leq \frac{1}{e!} \prod_{k \geq 1} (q+k) = e(d^{e,q}) = {e+q \choose q}.
$$
Thus
$$
\deg(X) = \sum_{i=1}^{m'} x_i e(d^i) \leq \sum_{i=1}^{m'} x_i {e+q \choose q} \leq  {e+q \choose q}.
$$
Moreover, if $\deg(X) = {e+q \choose q}$, then  $m=m'=1$ and $d^1 = d^{e,q}$, and hence, $X \subseteq \nP^r$ is arithmetically Cohen--Macaulay. Thus the graded minimal free resolution of $S(X)$ is of the form in Theorem \ref{thm:main} $(4)$. The converse is also immediate from Theorem \ref{thm:main}.
\end{proof}

Finally, we turn to the problem determining the next-to-maximal values of the graded betti numbers in the first nontrivial strand. Under some geometric assumption, we have the following answer for the case of $q=1$ (cf. \cite[Theorems 4.1 and 4.3]{HK}).

\begin{theorem}\label{thm:next-to-maximal}
Let $X \subseteq \nP V = \nP^r$ be a non-degenerate projective variety of dimension $n \geq 1$ and codimension $e \geq 1$. Suppose that $\deg(X) \geq e+2$ and $X \cap \Lambda_0 \subseteq \Lambda_0 = \nP^e$ is in linearly general position for a general linear subspace $\Lambda_0 \subseteq \nP^r$ with $\dim \Lambda_0=e$. Then
$$
\kappa_{p,1}(S(X), V) \leq p {e+1 \choose p+1} - {e \choose p-1}~\text{ for all $1 \leq p \leq e-1$}
$$
and $\kappa_{p,1}(S(X), V)=0$ for $p \geq e$.
Furthermore, the following are equivalent:
\begin{enumerate}
 \item $\kappa_{p,1}(S(X), V) = p {e+1 \choose p+1} - {e \choose p-1}$ for all $p$ with $1 \leq p \leq e-1$.
 \item  $\kappa_{p,1}(S(X), V) = p {e+1 \choose p+1} - {e \choose p-1}$ for some $p$ with $1 \leq p \leq e-1$.
 \item $\deg (X) = e+2$ and $X \subseteq \nP^r$ is arithmetically Cohen--Macaulay. 
 \item The graded minimal free resolution of $S(X)$ is of the form
 \begin{align*}
& 0 \longleftarrow S(X) \longleftarrow S \longleftarrow S(-2)^{\oplus \kappa_{1,1}} \longleftarrow S(-3)^{\oplus  \kappa_{2,1}}  \longleftarrow  \\
& \cdots \longleftarrow S(-e+1)^{\oplus  \kappa_{e-2, 1}} \longleftarrow S(-e)^{\oplus  \kappa_{e-1, 1}}\longleftarrow S(-e-2)\longleftarrow 0,
 \end{align*}
 where $S:=\Sym V$ and $\kappa_{p,1}(S(X), V) \leq p {e+1 \choose p+1} - {e \choose p-1}$ for all $p$ with $1 \leq p \leq e-1$.
\end{enumerate}
\end{theorem}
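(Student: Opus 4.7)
The plan is to mirror the strategy of Theorem~\ref{thm:main}: reduce via iterated general hyperplane sections to a zero-dimensional scheme, then apply Boij--S\"oderberg theory, but now measuring against the ``next-to-extremal'' pure degree sequence $\widetilde{d} := (0, 2, 3, \ldots, e, e+2)$. Concretely, since $X$ is a non-degenerate variety it satisfies $\ND(1)$, so applying Lemma~\ref{lem:reduction}(b) with general hyperplanes $n$ times yields $\kappa_{p, 1}(S(X), V) \leq \kappa_{p, 1}(S(X_0), V_0)$, where $X_0 = X \cap \Lambda_0 \subseteq \Lambda_0 = \nP^e$ is a zero-dimensional reduced scheme (hence arithmetically Cohen--Macaulay) of $N := \deg(X) \geq e+2$ points in linearly general position.

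The essential new ingredient is the vanishing $\kappa_{e, 1}(S(X_0), V_0) = 0$. This is needed because, in the Boij--S\"oderberg decomposition $\pi(S(X_0)) = \sum_i x_i \pi(d^i)$ by length-$e$ sequences with $d_1^i \geq 2$, only $d^{e,1}$ can contribute to $\kappa_{e, 1}$ (the condition $d_e = e+1$ together with $d_1 \geq 2$ forces $d_k = k+1$ for all $k$), so its coefficient $x_0$ coincides with $\kappa_{e, 1}(S(X_0), V_0)$. I would prove the vanishing by identifying $\kappa_{e,1}(S(X_0),V_0)$ with $\dim \operatorname{socle}(A)_1$ for the Artinian reduction $A = S(X_0)/(t)$, $t \in V_0$ general. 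A socle element in degree $1$ lifts to $L \in V_0 \setminus \langle t \rangle$ satisfying $L \cdot V_0 \subseteq t \cdot V_0 + (I_{X_0})_2$, which, evaluated on $X_0$, says that the pointwise ratio $\alpha := L/t \in \kk^N = H^0(X_0, \sO_{X_0}(1))$ preserves $V_0 \subseteq \kk^N$ under pointwise multiplication. Diagonalizing $\alpha|_{V_0}$ and writing $V_0 = \bigoplus_j V_0^{(j)}$ for the eigenspaces corresponding to the distinct values $\lambda_j$ of $\alpha$ on $X_0$ (with multiplicities $m_j$), the linearly general position hypothesis forces $\dim V_0^{(j)} = m_j - (N - e - 1)$ whenever nonzero. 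Summing over the $k'$ nonzero components gives $(e+1) + k'(N - e - 1) = \sum_{V_0^{(j)} \neq 0} m_j \leq N$; since $N \geq e+2$ this forces $k' \leq 1$, and $k' = 1$ makes $\alpha$ constant, i.e., $L \in \langle t \rangle$, a contradiction.

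Once the vanishing is in hand, the refined Boij--S\"oderberg bound follows from a factor-by-factor analysis parallel to Lemma~\ref{lem:bs-bound}. For any length-$e$ sequence $d \neq d^{e,1}$ with $d_1 \geq 2$ and $d_p = p+1$, writing $d_k = k+1+a_k$ for $k > p$ (necessarily $a_e \geq 1$ and the $a_k$ non-decreasing), a direct comparison of the corresponding products gives $\kappa_{p, 1}(d) \leq \kappa_{p, 1}(\widetilde{d})$ with equality iff $d = \widetilde{d}$. Combined with $x_0 = 0$, this yields $\kappa_{p, 1}(S(X_0), V_0) \leq \kappa_{p, 1}(\widetilde{d}) = p\binom{e+1}{p+1} - \binom{e}{p-1}$ for $1 \leq p \leq e-1$, and $\kappa_{p,1}(S(X), V) = 0$ for $p \geq e$ (using Theorem~\ref{thm:main} for $p \geq e+1$ and combining the socle vanishing with Lemma~\ref{lem:reduction}(b) for $p = e$).

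For the equivalences, (4) $\Rightarrow$ (1) $\Rightarrow$ (2) is immediate. For (2) $\Rightarrow$ (3), equality in $\kappa_{p, 1}$ propagates down through Lemma~\ref{lem:reduction}(b) to $X_0$, and equality in the refined Boij--S\"oderberg inequality collapses the decomposition to $\pi(S(X_0)) = \pi(\widetilde{d})$, forcing $\deg(X) = \deg(X_0) = e+2$; Lemma~\ref{lem:X_1aCM} (with $q=1$, whose hypothesis (c) holds because $\pi(\widetilde{d})$ has all its higher-row entries concentrated at $p=e$) is then applied iteratively to lift arithmetic Cohen--Macaulayness back from $X_0$ to $X$. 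The implication (3) $\Rightarrow$ (4) follows because arithmetic Cohen--Macaulayness of $X$ together with $\deg(X) = e+2$ forces the Boij--S\"oderberg decomposition of $\pi(S(X))$ itself to equal $\pi(\widetilde{d})$. The main obstacle throughout is the socle vanishing: Boij--S\"oderberg alone cannot preclude a positive coefficient $x_0$ of $d^{e,1}$, and only the eigenvalue argument above, driven by linearly general position, rules it out.
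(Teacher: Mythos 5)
Your proposal is correct and follows the paper's global architecture: reduce to the general zero-dimensional section $X_0$ via Lemma~\ref{lem:reduction}, compare against the pure table $\pi(\widetilde{d}^{e,1})$ with $\widetilde{d}^{e,1}=(0,2,\ldots,e,e+2)$ by a factor-by-factor estimate as in Lemma~\ref{lem:bs-bound}, and run the equivalences through Lemma~\ref{lem:X_1aCM}. The one place you genuinely diverge is the key vanishing $K_{e,1}(S(X_0),V_0)=0$, which is exactly where linearly general position enters. The paper imports this from the literature: if $K_{e-1,1}(S(X_0),V_0)\neq 0$, then by \cite[Proposition 3.3]{NP} the points lie on a rational normal curve, and \cite[Lemma 2.1]{NP} then kills $K_{e,1}$. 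You instead identify $K_{e,1}(S(X_0),V_0)$ with the degree-one socle of the Artinian reduction and rule out a socle element $L$ by diagonalizing pointwise multiplication by $\alpha=L/t$ on $V_0\subseteq\kk^N$ and counting eigenspace dimensions against linearly general position; the inequality $(e+1)+k'(N-e-1)\leq N$ with $N\geq e+2$ forces $L\in\langle t\rangle$. This is a self-contained and arguably more elementary substitute for the $K_{p,1}$-theorem-style input, and I have checked that the eigenspace count is sound. What it costs you is that the argument as written assumes $X_0$ is reduced (``$N$ points'', ``pointwise ratio''); in positive characteristic a general complementary linear section of a variety need not be reduced, so to match the paper's stated generality you would either adapt the argument to a non-reduced scheme in linearly general position (generalized eigenspaces indexed by connected components) or restrict to the separable/non-strange situation in which the paper's own closing remark justifies the linearly general position hypothesis anyway. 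Two cosmetic points: the coefficient of $\pi(d^{e,1})$ in the Boij--S\"{o}derberg decomposition equals $\kappa_{e,1}(S(X_0),V_0)/e$ rather than $\kappa_{e,1}(S(X_0),V_0)$, since $\kappa_{e,1}(d^{e,1})=e$; and in the eigenspace count one only has $\dim V_0^{(j)}\leq m_j-(N-e-1)$, not equality, but the inequality is all you need. Neither affects the argument.
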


\begin{proof}
We follow the same strategy as in proving Theorem \ref{thm:main}.
First, consider the degree sequence $\widetilde{d}^{e,1}:=(0,2,3,\ldots, e-1,e,e+2)$ of length $e$. Then
$$
\kappa_{p,1}(\widetilde{d}^{e,1})= p {e+1 \choose p+1} - {e \choose p-1}~~\text{ for $1 \leq p \leq e-1$}~\text{ and }~e(\widetilde{d}^{e,1}) = e+2.
$$
Moreover, the betti table $\pi(\widetilde{d}^{e,1})$ is the following:
\begin{center}
\texttt{ \begin{tabular}{c|ccccccc}
         & $0$ & $1$ & $2$ & $\cdots$ & $e-2$ & $e-1$ & $e$ \\ \hline
    $0$    & $1$ & $-$ &  $-$ & $\cdots$ & $-$ &$-$ & $-$ \\
    $1$ & $-$ & $ {e+1 \choose 2} - 1$ & $2 {e+1 \choose 3} - e$  & $\cdots$ & $(e-2){e+1 \choose e-1} - {e \choose e-3}$ & $(e-1){e+1 \choose e} - {e \choose e-2}$ & $-$ \\ 
    $2$    & $-$ & $-$ &  $-$ & $\cdots$ & $-$ &$-$ & $1$  \\
\end{tabular}}
\end{center}
As in Lemma \ref{lem:bs-bound}, one can easily check that if $d=(0,d_1,d_2, \ldots, d_{e-1}, d_e)$ is a degree sequence of length $e$ with $d_1 \geq 2$ and $d_e \geq e+2$, then 
$$
\kappa_{p,1}(d) \leq \underbrace{p {e+1 \choose p+1} - {e \choose p-1}}_{=\kappa_{p,1}(\widetilde{d}^{e,1})}~~\text{ for $1 \leq p \leq e-1$}~\text{ and }~e(d) \geq \underbrace{e+2}_{=e(\widetilde{d}^{e,1})}
$$
and one equality holds (or all equalities hold) if and only if $d=\widetilde{d}^{e+1}$. 
Using Boij--S\"{o}derberg theory, one can prove that the theorem holds when $X \subseteq \nP V = \nP^r$ is a non-degenerate arithmetically Cohen--Macaulay projective scheme of dimension $n \geq 0$ and $\deg(X) \geq e+2$ under the assumption that $K_{e,1}(S(X), V)=0$. In particular, the implications $(3) \Longleftarrow (4) \Longrightarrow (1) \Longrightarrow (2)$ of the theorem in general hold.

\medskip

Now, choose a general linear subspace $\Lambda_i \subseteq \nP^r$ with $\dim \Lambda_i=e+i$ for each $0 \leq i \leq n$ such that
$$
\Lambda_0 \subseteq \Lambda_1 \subseteq \cdots \subseteq \Lambda_{n-1} \subseteq \Lambda_n = \nP^r.
$$
We write $\Lambda_i = \nP V_i = \nP^{e+i}$ for some quotient space $V_i$ of $V$, and we set
$$
X_i:=X \cap \Lambda_i \subseteq \Lambda_i = \nP^{e+i} = \nP V_i
$$
so that $\dim X_i = i$. Notice that $X_i \subseteq \nP^{e+i}$ satisfies property $\ND(1)$. We claim that $K_{e,1}(S(X_0), V_0)=0$. If $K_{e-1,1}(S(X_0), V_0)=0$, then $K_{e,1}(S(X_0), V_0)=0$. Thus suppose that $K_{e-1,1}(S(X_0), V_0) \neq 0$. By \cite[Proposition 3.3]{NP}, $X_0$ is contained in a rational normal curve in $\nP^e$. Here notice that this assertion is trivial when $\deg(X_0) \leq e+2$. Then \cite[Lemma 2.1]{NP} shows that $K_{e,1}(S(X_0), V_0)=0$. Recall that $X_0 \subseteq \nP^e$ is arithmetically Cohen--Macaulay. Applying Lemma \ref{lem:reduction} and the theorem for arithmetically Cohen--Macaulay projective schemes, we have
$$
\kappa_{p,1}(S(X), V) \leq \kappa_{p,1}(S(X_0), V_0) \leq p {e+1 \choose p+1} - {e \choose p-1}~~\text{ for $1 \leq p \leq e-1$}
$$
and $\kappa_{p,1}(S(X), V)=\kappa_{p,1}(S(X_0), V_0)=0$ for $p \geq e$. In particular, $K_{e,1}(S(X),V) =0$, so we get $(3) 
\Longrightarrow (4)$. The remaining implication $(2) \Longrightarrow (3)$ follows from Lemma \ref{lem:X_1aCM} as in the last part of the proof of Theorem \ref{thm:main}.
\end{proof}

Notice that the assumption in Theorem \ref{thm:next-to-maximal} that $X_0 \subseteq \nP^e$ is in linearly general position holds if a general curve section $X_1 \subseteq \nP^{e+1}$ is an integral scheme but not a strange curve by the general position lemma (see \cite[Lemma 1.1]{Rathmann}). In particular, the theorem holds when $X$ is a projective variety and $\operatorname{char}(\kk)=0$. This kind of geometric assumption is essential for the theorem. Without such a condition, a counterexample to the theorem exists (see Example \ref{ex:projv_2(P^2)} $(2)$). In fact, to prove the theorem, we employ geometric arguments involving zero-dimensional schemes, which also served as a main ingredient in Green’s celebrated $K_{p,1}$-theorem \cite{Green1, Green2} (see also \cite[Section 3.3]{AN} and \cite[Section 3]{NP}). On the other hand, for the classification of arithmetically Cohen--Macaulay varieties of almost minimal degree ($\text{degree}=\text{codim}+2$),  we refer to the works of Fujita \cite{Fujita} and Brodmann--Park \cite{BP}.

\begin{remark}
When $X \subseteq \nP V$ is a non-degenerate projective variety and satisfies $\ND(q)$ for $q \geq 2$, we do not know what should be the next-to-maximal value of $\kappa_{p,q}(S(X), V)$. However, if $X \subseteq \nP V$ is a $q$-secant variety (in which case, the $q$-th row of the betti table is the first nontrivial strand), then not only the maximal value of $\kappa_{p,q}(S(X), V)$ as in Theorem \ref{thm:main} but also the next-to-maximal value of $\kappa_{p,q}(S(X), V)$ is known (see \cite{CK}). When the $\kappa_{p,q}(S(X), V)$ attain the next-to-maximal (e.g., $X$ is the $q$-secant variety of an elliptic normal curve), the betti table of $S(X)$ is nothing but the betti table $\pi(\widetilde{d}^{e,q})$ of the degree sequence 
$$
\widetilde{d}^{e,q} := (0, q+1, q+2, \ldots, q+e-2, q+e-1, 2q+e)
$$
of length $e$. We are wondering whether the same is true when $X \subseteq \nP V$ is a non-degenerate projective variety and satisfies $\ND(q)$. It would be particularly interesting to know whether $K_{e,q}(S(X), V)=0$ whenever the $\kappa_{p,q}(S(X), V)$ do not attain the maximal value and whether there is an arithmetically Cohen--Macaulay variety whose betti table is the same to the betti table $\pi(d)$ of a degree sequence 
$$
d:=(0,q+1, q+2, \ldots, q+e-2, q+e-1, d_e)
$$
of length $e$ with $q+e+1 \leq d_e \leq 2q+e-1$.
\end{remark}


\end{document}